\newtheorem{theorem}{Theorem}[section]
\newtheorem{lemma}[theorem]{Lemma}
\newtheorem{corollary}[theorem]{Corollary}
\newtheorem{proposition}[theorem]{Proposition}
\theoremstyle{definition}
\newtheorem{remark}[theorem]{Remark}
\newcommand{\bp}{\begin{proof}}
\newcommand{\ep}{\end{proof}}
\newcommand{\bpr}{\begin{proposition}}
\newcommand{\epr}{\end{proposition}}
\newcommand{\bl}{\begin{lemma}}
\newcommand{\el}{\end{lemma}}
\newcommand{\F}{{\Bbb F}}
\newcommand{\f}{{\varphi }}
\let\leq=\leqslant
\let\geq=\geqslant
\numberwithin{equation}{section}
\begin{document}

\title{On finite soluble groups\\ with almost fixed-point-free automorphisms\\ of non-coprime order}

\author{{E. I. Khukhro}}%

\address{Sobolev Institute of Mathematics, Novosibirsk, Russia,\newline  and University of Lincoln, UK}%
\email{khukhro@yahoo.co.uk}%

\thanks{The work is supported by  Russian Science Foundation (project
14-21-00065)}

\begin{abstract} It is proved that if a finite $p$-soluble group $G$ admits an automorphism $\varphi$ of order $p^n$  having at most $m$ fixed points on every $\f$-invariant elementary abelian $p'$-section of $G$, then the $p$-length of $G$ is bounded above in terms of $p^n$ and $m$; if in addition the group $G$ is soluble, then the Fitting height of $G$ is   bounded above in terms of $p^n$ and $m$. It is also proved that if a finite soluble group $G$ admits an automorphism $\psi$ of order $p^aq^b$ for some primes $p,q$, then the Fitting height of $G$ is bounded above in terms of $|\psi |$ and $|C_G(\psi )|$.
\end{abstract}

\keywords{finite soluble group, automorphism, $p$-length, Fitting height.}

\maketitle
\centerline{\textit{to Yurii Leonidovich Ershov on the occasion of his 75-th birthday}}
\section{Introduction}

Studying groups with ``almost fixed-point-free'' automorphisms means obtaining restrictions on the structure of  groups depending on their automorphisms and certain restrictions imposed on the fixed-point subgroups. In this paper we consider questions of bounding the $p$-length and Fitting height of finite $p$-soluble and soluble groups admitting almost fixed-point-free automorphisms of non-coprime order.

Let $\f \in {\rm Aut\,}G$ be an automorphism of a finite group $G$. Studying the structure of the group $G$ depending on $\f $ and the  fixed-point subgroup $C_G(\f )$ is one of the most important and fruitful avenues in finite group theory. The celebrated  Brauer--Fowler theorem  \cite{bra-fow55} (bounding the index of the soluble radical in terms of the order of $|C_G(\f )|$ when $|\f |=2$) and Thompson's theorem \cite{tho59} (giving the nilpotency of $G$ when $\f $ is of prime order and acts fixed-point-freely, that is, $C_G(\f)=1$)  lie in the foundations of the classification of finite simple groups. The classification  was used for obtaining further results on solubility of $G$, or of a suitable ``large'' subgroup. For example, using the classification Hartley \cite{har92} generalized the Brauer--Fowler theorem to any order of $\f$:  the group  $G$ has a soluble subgroup of index bounded in terms of $|\f |$ and $|C_G(\f )|$.

Now suppose that the group $G$ is soluble. Further information on the structure of $G$ is sought first of all in the form of bounds for the Fitting height (nilpotent length). A bound for the Fitting height naturally reduces further studies to the case of nilpotent groups with (almost) fixed-point-free automorphisms, for which, in turn,  problems arise of bounding the derived length, or the nilpotency class of the group or  of  a suitable ``large'' subgroup. Such bounds for nilpotent groups so far have been obtained in the cases of $\f$ being of prime order or of order $4$ in \cite{hi,kr, kr-ko,khu90,kov, mak-khu06}. In addition, definitive general results have been obtained in the study of almost fixed-point-free $p$-automorphisms of finite $p$-groups \cite{alp,khu85,sha93,khu93,me,jai}.

On bounding the Fitting height, especially strong results have been obtained in the case of  soluble groups of automorphisms $A\leq  {\rm Aut\,}G$ of coprime order.
    Thompson \cite{tho64} proved that if both groups $G$ and $A$ are soluble and have coprime orders, then the Fitting height of $G$ is bounded in terms of the Fitting height of $C_G(A)$ and the number $\alpha (A)$ of prime factors of $|A|$ with account for multiplicities. Later the bounds in Thompson's theorem were improved in numerous papers, with definitive results obtained by Turull \cite{tur84} and Hartley and Isaacs \cite{har-isa90} %!
    with linear bounds in terms of $\alpha (A)$ for the Fitting height of the group or of a ``large subgroup''.

The case of non-coprime orders of $G$ and $A\leq {\rm Aut\,}G$ is more difficult. Bell and Hartley \cite{be-ha} constructed examples showing that for any non-nilpotent finite group $A$ there are soluble groups $G$  of  arbitrarily high Fitting height admitting $A$ as a fixed-point-free group of automorphisms.   But if $A$ is nilpotent  and $C_G(A)=1$,  then the Fitting height of $G$ is bounded in terms of $\alpha (A)$ by a special case of Dade's  theorem \cite{dad69}. %!
Unlike the aforementioned ``linear'' results in the coprime case, the bound in Dade's theorem is exponential. Improving this bound to a linear one is a difficult problem; it was tackled in some special cases by Ercan and G\"{u}lo\u{g}lu \cite{er12,er04,er08}.

In the almost fixed-point-free situation, even for a cyclic group of automorphisms $\langle \f\rangle\leq {\rm Aut\,}G$ it is still an open problem to obtain a bound for the Fitting height of a finite soluble group $G$ in terms of $|\f |$ and $|C_G(\f )|$ (this question is equivalent to the one recorded by Belyaev in Kourovka Notebook \cite{kour} as Hartley's Problem~13.8(a)). Beyond the fixed-point-free case of Dade's theorem, so far the only cases where an affirmative solution is known are the cases of automorphisms of primary order $p^n$ (Hartley and Turau \cite{ha-tu}) and of biprimary order $p^aq^b$ (which is discussed in the present paper). 
%! An even more difficult problem in the non-coprime case is obtaining a subgroup of index bounded in terms of $|\f |$ and $|C_G(\f )|$ whose Fitting height would be bounded in terms of $\alpha (\langle\f\rangle)$. 

Another generalization of fixed-point-free automorphisms in the non-coprime case is   Thompson's problem on bounding the $p$-length of a finite $p$-soluble group $G$ admitting
a $p$-group of automorphisms $P$ that acts fixed-point-freely on every $P$-invariant $p'$-section of $G$.  Rae \cite{rae} and Hartley and Rae \cite{ha-ra} solved this problem
in the affirmative for $p\ne 2$, as well as for cyclic $P$ for any $p$. A special case of this problem is when a $p$-soluble group $G$ admits a so-called  $p^n$-splitting automorphism $\f$, which means that $xx^{\f }x^{\f ^2}\cdots x^{\f ^{p^n-1}}=1$ for all $x\in G$ (this  also implies $\f^{p^n}=1$); then of course $\f$ automatically acts fixed-point-freely on $\f$-invariant $p'$-sections. This case was actually considered earlier by Kurzweil \cite{kur} who obtained bounds for the Fitting height of a soluble group $G$, and these bounds were improved to linear ones by  Meixner \cite{mei}. If it is only known that $\f$ induces a $p^n$-splitting automorphism on a $\f$-invariant Sylow $p$-subgroup of $G$, then there is already a bound in terms of $n$ for the $p$-length of $G$: for $p\ne 2$ such a bound was obtained by Wilson~\cite{wil}, and for all primes $p$ in \cite{khu-shu132} even under a weaker assumption.

In this paper we consider the natural generalization of Thompson's problem for a
$p$-soluble
group $G$  admitting an automorphism $\f$ of order $p^n$ in which the condition that  $\f$  acts  fixed-point-freely on $\f$-invariant $p'$-sections  is replaced by that $\f$ acts almost fixed-point-freely on these sections.
It is actually sufficient to impose the restriction on the number of fixed points of $\f$ only on elementary abelian  $\f$-invariant $p'$-sections.

\begin{theorem}\label{t1}
If a finite $p$-soluble group $G$ admits an automorphism $\varphi$ of order $p^n$ such that $\f $ has at most $m$ fixed points on every $\f$-invariant elementary abelian $p'$-section of $G$, then the $p$-length of $G$ is bounded above in terms of $p^n$ and $m$.
\end{theorem}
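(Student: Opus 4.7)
The strategy is to extend the fixed-point-free result of Hartley and Rae \cite{ha-ra}---which covers the case $m=1$ and applies here for all $p$ since $\langle\varphi\rangle$ is cyclic---to the almost fixed-point-free setting, by tracking a bounded defect $|C_V(\varphi)|\le m$ on each $\varphi$-invariant elementary abelian $p'$-section $V$. I would proceed by induction on $n$, with a minimal counterexample argument at each stage, and appeal to Hall--Higman-style representation theory of $\langle\varphi\rangle$ on $p'$-modules.

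First I would pass to a minimal counterexample $G$ via the usual moves: factoring out $O_{p'}(G)$ costs at most one unit of $p$-length, so we may assume $O_{p'}(G)=1$; then $P=O_p(G)$ is a $\varphi$-invariant elementary abelian minimal normal subgroup with $C_G(P)\le P$. The upper $p$-series of $G$ refines to a $\varphi$-invariant series with alternating $p$- and elementary abelian $p'$-factors. On each such $p'$-factor $V$, viewed as an $\F_q[\langle\varphi\rangle]$-module with $q\ne p$, the coprime action and Maschke's theorem yield a $\varphi$-invariant decomposition $V=C_V(\varphi)\oplus W_V$ with $|C_V(\varphi)|\le m$ and $\varphi$ acting fixed-point-freely on $W_V$. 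In particular $\dim_{\F_q} C_V(\varphi)\le \log_2 m$ uniformly in $q$.

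The central step is a Hall--Higman-style lower bound on the degree of the minimal polynomial of $\varphi$ on a distinguished elementary abelian $p'$-section arising from a long critical tower---the same mechanism that underlies \cite{ha-ra}. If $\ell_p(G)$ exceeds a suitable explicit function of $p^n$, this bound produces, on a carefully chosen $V$, a Jordan structure whose trivial-eigenvalue part has dimension exceeding $\log_2 m$, contradicting $|C_V(\varphi)|\le m$. The ``free'' summands $W_V$ behave essentially as in the Hartley--Rae argument, and one tries to transfer the analysis from the original group to the quotient by the accumulated fixed-point parts.

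The hard part will be precisely that the defect subspaces $C_V(\varphi)$ are distributed across many distinct sections and cannot be annihilated by a single quotient, so a naive reduction to the Hartley--Rae hypothesis fails. The remedy in my plan is the induction on $n$: passing to $\varphi^p$ of order $p^{n-1}$ and controlling $|C_V(\varphi^p)|$ in terms of $m$ and $p$ through the cyclic $\F_q[\langle\varphi\rangle]$-module structure (using that elements fixed by $\varphi^p$ but not by $\varphi$ lie in $\varphi$-orbits of length $p$ and contribute in a bounded way to the fixed-point subspace of $\varphi^p$), the inductive hypothesis then disposes of all but a controlled top layer, which is finished by a direct application of \cite{ha-ra} together with the linear-algebraic bookkeeping from the previous step. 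This module-theoretic bookkeeping is where I expect the principal technical difficulty to lie.
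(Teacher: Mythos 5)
Your plan has two genuine gaps, one of which is fatal to the proposed induction. First, the central step --- that if the $p$-length is large then some elementary abelian $p'$-section $V$ acquires a trivial-eigenvalue part of dimension exceeding $\log_2 m$ --- is asserted without a mechanism, and Hall--Higman minimal polynomial bounds do not supply one: a single free $\F _q\langle\f\rangle$-summand already realizes the full minimal polynomial $x^{p^n}-1$ while contributing only a one-dimensional fixed space, so arbitrarily large sections with large minimal polynomial degree are compatible with $|C_V(\f )|=q$. What the paper actually extracts from the hypothesis is different: with $P$ a $\f$-invariant Sylow $p$-subgroup of $G/O_p(G)$ acting on the Frattini quotient $V$ of an invariant Sylow $q$-subgroup of $O_{p'}$, one takes a maximal abelian normal subgroup $M$ of $P\langle\f\rangle$ and shows by Clifford theory that $[V,[M,\f ^{p^{n-1}}]]$ is a \emph{free} $\F _q\langle\f\rangle$-module, whence its order equals $|C(\f )|^{p^n}\le m^{p^n}$; since $[M,\f ^{p^{n-1}}]$ acts faithfully there, its order is bounded, and from this one bounds the derived length of $[P,\f ^{p^{n-1}}]$ (Proposition~\ref{p1}) and then invokes Theorem~\ref{t-hhh}. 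This quantitative step is what your sketch is missing.

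Second, your induction on $n$ by passing to $\f ^p$ cannot work as stated, because $|C_V(\f ^p)|$ is not bounded in terms of $m$ and $p$. If $q\equiv 1\pmod p$ and $\f$ acts on $V=\F _q^N$ as multiplication by a primitive $p$-th root of unity, then $C_V(\f )=0$ while $C_V(\f ^p)=V$ is arbitrarily large; the elements lying in $\f$-orbits of length $p$ inside $C_V(\f ^p)$ are not bounded in number, contrary to your parenthetical claim. The paper avoids this by keeping $\f$ itself and instead descending to a quotient $\tilde G/O_{p',p}(\tilde G)$ on which $\f ^{p^{n-1}}$ acts trivially (because $[P,\f ^{p^{n-1}}]$ has been placed inside $H=O_{p',p,\dots ,p'\!,p}$ of bounded length), so that the induced automorphism has order dividing $p^{n-1}$ while the hypothesis on elementary abelian $p'$-sections passes to the quotient unchanged. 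You would need to restructure your induction along these lines for the argument to close.
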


It would be interesting to obtain a bound of the $p$-length in terms of $n$ (or at least  in terms of $p^n$) for  some subgroup of index bounded in terms of $p^n$ and $m$.

\begin{remark}\label{r1}
There is a certain similarity with the situation for a $p^n$-splitting automorphism described above. Namely, if, for  a $p$-soluble group $G$ with an automorphism $\f$ of order $p^n$,  instead of a restriction on the number of fixed points on $p'$-sections, we have a restriction $|C_P(\f )|=p^m$ on the number of fixed points of $\f$ in a $\f$-invariant Sylow $p$-subgroup $P$, then we also obtain a bound for the $p$-length of $G$. Indeed, then the derived length of $P$ is bounded in terms of $p$, $n$, and $m$ by Shalev's theorem \cite{sha93}, so the bound for the $p$-length immediately follows from the Hall--Higman theorems \cite{ha-hi} for $p\ne 2$, and the theorems of Hoare \cite{hoa}, Berger and Gross \cite{ber-gro}, and Bryukhanova \cite{bry}. Moreover, by \cite{khu93} the group $P$ even has a (normal) subgroup of index bounded  in terms of $p$, $n$, and $m$ that has $p^n$-bounded derived length. Therefore  by the Hall--Higman--Hartley Theorem~\ref{t-hhh} (see below) there is a characteristic subgroup $H$ of $G$ such that the $p$-length of $H$ is $p^n$-bounded and a Sylow $p$-subgroup of  the quotient $G/H$ has order bounded in terms of $p$, $n$, and $m$.
 \end{remark}

For soluble groups, Theorem~\ref{t1} can be combined with known results to give a bound for the Fitting height.

\begin{corollary}\label{c1}
If a finite soluble group $G$ admits an automorphism $\varphi$ of order $p^n$ such that $\f $ has at most $m$ fixed points on every $\f$-invariant elementary abelian $p'$-section of $G$, then the Fitting height of $G$ is bounded above in terms of $p^n$ and $m$.
\end{corollary}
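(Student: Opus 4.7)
The plan is to combine the $p$-length bound from Theorem~\ref{t1} with the Hartley--Turau theorem~\cite{ha-tu} applied to the coprime $p'$-layers of the upper $p$-series. First, Theorem~\ref{t1} yields a bound $\ell_p(G)\leq L=L(p^n,m)$. I would then consider the upper $p$-series of $G$,
$$1=P_0\leq Q_0\leq P_1\leq Q_1\leq\cdots\leq P_L\leq Q_L=G,$$
with $Q_i/P_i$ a $p'$-group and $P_{i+1}/Q_i$ a $p$-group. The $p$-factors $P_{i+1}/Q_i$ are nilpotent, of Fitting height $1$ each, so they contribute at most $L$ to $h(G)$. Since Fitting height is subadditive along a normal series, it suffices to bound $h(Q_i/P_i)$ for each $p'$-factor in terms of $p^n$ and $m$ and sum over the $L+1$ layers.

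Fix a $p'$-factor $H:=Q_i/P_i$. Then $\f$ acts on $H$ as an automorphism of $p$-power order coprime to $|H|$, and the hypothesis on $\f$-invariant elementary abelian $p'$-sections descends to $H$. By the Hartley--Turau theorem~\cite{ha-tu}, $h(H)$ is bounded in terms of $|\f|$ and $|C_H(\f)|$, so the task reduces to bounding $|C_H(\f)|$ in terms of $p^n$ and $m$.

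For this centralizer bound I would exploit that in coprime action one has $C_{H/N}(\f)=C_H(\f)N/N$ for every $\f$-invariant normal $N\trianglelefteq H$; iterating along a $\f$-invariant chief series $1=N_0\leq N_1\leq\cdots\leq N_s=H$ with elementary abelian factors $V_j=N_j/N_{j-1}$, one obtains $|C_H(\f)|=\prod_{j=1}^{s}|C_{V_j}(\f)|$, with each factor bounded by $m$ by hypothesis. The main obstacle is that the composition length $s$ is a priori unbounded, so the crux will be a finer coprime-action analysis showing that only boundedly many chief factors contribute non-trivial $\f$-fixed points, thereby yielding the required bound on $|C_H(\f)|$. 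Once this is in hand, Hartley--Turau delivers the bound on $h(H)$ and summation along the upper $p$-series gives the desired $(p^n,m)$-bound on $h(G)$.
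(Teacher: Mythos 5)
Your overall skeleton --- invoke Theorem~\ref{t1} to bound the $p$-length, pass to the upper $p$-series, and treat each $p'$-factor $T$ by exploiting the coprime action of $\f$ --- is exactly the paper's. But the step you single out as ``the crux'' is not merely missing: it is false. The order $|C_T(\f)|$ cannot be bounded in terms of $p^n$ and $m$. The hypothesis of the corollary is perfectly consistent with $\f$ centralizing a cyclic $q$-subgroup of order $q^k$ with $q\le m$, $q\ne p$, and $k$ arbitrary: every $\f$-invariant elementary abelian section of such a subgroup has order at most $q\le m$, yet the centralizer it contributes has order $q^k$. Correspondingly, in your product $\prod_j|C_{V_j}(\f)|$ each of the unboundedly many chief factors may contribute non-trivial fixed points, so no ``finer coprime-action analysis'' can rescue the bound. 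Hence the reduction to the Hartley--Turau theorem \cite{ha-tu}, which genuinely requires a bound on the \emph{order} of the centralizer, cannot be completed.

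The paper avoids this by using Thompson's theorem \cite{tho64} instead, which in the coprime situation bounds $h(T)$ in terms of $\alpha(\langle\f\rangle)=n$ and the \emph{Fitting height} of $C_T(\f)$ rather than its order. That Fitting height is bounded by a rank argument: every elementary abelian section of $C_T(\f)$ is a $\f$-invariant elementary abelian $p'$-section of $G$ on which $\f$ acts trivially, hence has order at most $m$ by hypothesis; since the rank of a finite group is bounded in terms of the ranks of its elementary abelian sections, $C_T(\f)$ has $m$-bounded rank, and since the Fitting height of a finite soluble group is bounded in terms of its rank, $h(C_T(\f))$ is bounded. Feeding this into Thompson's theorem bounds $h(T)$, and summation along the boundedly many layers of the upper $p$-series finishes the proof. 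If you replace your centralizer-order step by this rank argument (and Hartley--Turau by Thompson), the rest of your outline goes through.
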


The technique used in the proof of Theorem~\ref{t1} is also applied in the proof of the soluble case of the following theorem on almost fixed-point-free automorphism of biprimary order; the reduction to the soluble case is given by Hartley' theorem \cite{har92} (based on the classification of  finite simple groups).

\begin{theorem}\label{t2}
If a finite group  $G$ admits an automorphism $\varphi$ of order $p^aq^b$ for some primes $p,q$ and nonnegative integers $a,b$, then $G$ has a soluble subgroup whose index and  Fitting height are bounded above in terms of $p^aq^b$ and $|C_G(\varphi )|$.
\end{theorem}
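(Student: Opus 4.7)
The first move is to reduce to the case where $G$ itself is soluble. Hartley's theorem \cite{har92} gives a soluble subgroup $S_0\le G$ of index bounded in terms of $|\varphi|$ and $|C_G(\varphi)|$, and intersecting $S_0$ with its $\langle\varphi\rangle$-translates produces a $\varphi$-invariant soluble subgroup of still-bounded index. It therefore suffices to bound the Fitting height $h(G)$ under the assumption that $G$ is itself soluble, with $\varphi$ of order $p^aq^b$ and $m:=|C_G(\varphi)|$. Write $\alpha=\varphi^{q^b}$ and $\beta=\varphi^{p^a}$, so $\varphi=\alpha\beta$, the two factors commute, and $|\alpha|$, $|\beta|$ are respectively a $p$-power and a $q$-power.

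The plan is to combine the Hartley--Turau theorem \cite{ha-tu} with the method developed for Theorem~\ref{t1}. On the $\varphi$-invariant soluble subgroup $C_G(\alpha)$ the automorphism $\varphi$ acts as $\beta$, which has $q$-power order and satisfies $|C_{C_G(\alpha)}(\beta)|=|C_G(\varphi)|\le m$; hence Hartley--Turau bounds $h(C_G(\alpha))$ in terms of $q^b$ and $m$, and symmetrically $h(C_G(\beta))$ is bounded in terms of $p^a$ and $m$. In a minimal counterexample I would then use coprime-action results (Thompson, Turull, Hartley--Isaacs) to dispose of the Hall $\{p,q\}'$-sections of the Fitting series, on which $\varphi$ acts with coprime order, and reduce to the ``$\{p,q\}$-part'' consisting of alternating $p$- and $q$-sections. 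For this part the Theorem~\ref{t1} machinery would be re-run in parallel: $\alpha$ controls $p$-lengths via the Hall--Higman--Hartley theorem, $\beta$ controls $q$-lengths symmetrically, and coprime action of $\beta$ on elementary abelian $p$-sections (and of $\alpha$ on elementary abelian $q$-sections) transfers fixed-point information between the two primes.

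The main obstacle is that neither $|C_G(\alpha)|$ nor $|C_G(\beta)|$ is individually bounded by $m$; only their intersection $|C_G(\varphi)|$ is. Consequently Theorem~\ref{t1} cannot be invoked as a black box with $\alpha$ or $\beta$ in place of $\varphi$; its proof must be interleaved with the Hartley--Turau arguments inside $C_G(\alpha)$ and $C_G(\beta)$, so that any hypothetical long chain of $p$- and $q$-Fitting sections is forced, via the coprime action of the off-prime component of $\varphi$ on each elementary abelian section, to carry a $\varphi$-invariant subsection with more than $m$ fixed points of $\varphi$, contradicting the hypothesis on $|C_G(\varphi)|$. Carrying out this alternating bootstrap cleanly, so that every constant obtained depends only on $|\varphi|$ and $m$, is what I expect to be the technical core of the proof; the reduction to the soluble case and the handling of primes different from $p$ and $q$ are, by comparison, routine.
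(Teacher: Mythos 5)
Your reduction to the soluble case via \cite{har92}, your identification of the prime-power base case via Hartley--Turau \cite{ha-tu}, and your diagnosis of the central obstacle (that neither $|C_G(\alpha)|$ nor $|C_G(\beta)|$ is bounded, only $|C_G(\varphi)|$ is) are all correct and match the paper. But the proposal stops exactly where the proof has to begin: the ``alternating bootstrap'' is announced as the expected technical core rather than carried out, and the minimal-counterexample framework with a symmetric treatment of $p$- and $q$-sections is not the argument that actually works. The paper's proof is asymmetric and proceeds by induction on $a+b$. One first disposes of $O_{q'}(G)$ and, later, of all $p'$-sections by Proposition~\ref{p} (Hartley--Turau applied inside the centralizer of the coprime part of $\varphi$, followed by Thompson's theorem \cite{tho64}); one may then assume $O_{q'}(G)=1$, so that $\bar G=G/O_q(G)$ acts faithfully on the single elementary abelian $q$-group $V=O_q(G)/\Phi(O_q(G))$.

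The decisive step you are missing is how to bound $|C_V(\varphi_p)|$: the subspace $C_V(\varphi_p)$ is $\varphi_q$-invariant, $\varphi_q$ has order $q^b$ acting on an elementary abelian $q$-group (equal characteristic, so the Jordan-form Lemma~\ref{l2} applies, not a coprime-action argument), and its fixed points there lie in $C_V(\varphi)$, which has order at most $|C_G(\varphi)|$; hence the rank and therefore the order of $C_V(\varphi_p)$ is bounded. This bounded quantity is precisely what Proposition~\ref{p1} needs to bound the derived length of $[P,\varphi_p^{p^{a-1}}]$ for a $\varphi_p$-invariant Sylow $p$-subgroup $P$ of $\bar G$; Theorem~\ref{t-hhh} then places $[P,\varphi_p^{p^{a-1}}]$ in a subgroup $H=O_{p',p,\dots,p',p}(\bar G)$ of bounded $p$-length, and on $\bar G/H$ modulo $O_{p',p}$ the element $\varphi_p^{p^{a-1}}$ acts trivially, so the induced automorphism has order dividing $p^{a-1}q^b$ and the induction on $a+b$ closes. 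Your phrase about the off-prime component ``transferring fixed-point information'' gestures at the right phenomenon, but the transfer runs through the non-coprime action of $\varphi_q$ on the coprime fixed-point space $C_V(\varphi_p)$, and without isolating that single faithful module (which requires the reduction $O_{q'}(G)=1$) there is no place to apply it; as written, the proposal does not constitute a proof.
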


Standard inverse limit arguments yield the following corollary for locally finite groups.

\begin{corollary}\label{c2}
If a locally finite group $G$ contains an element $g$ of order $p^aq^b$ for some primes $p,q$ and nonnegative integers $a,b$ with finite centralizer  $C_G(g)$, then  $G$ has a subgroup of finite index that has a finite normal series with locally nilpotent factors.
\end{corollary}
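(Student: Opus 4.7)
The plan is to derive the corollary from Theorem~\ref{t2} by a single inverse-limit (compactness) argument that simultaneously tracks a soluble subgroup of bounded index and a normal series of bounded length in it, so that a compatible choice made in every finite subgroup of $G$ glues together into the required subgroup of $G$ with its normal series.

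Write $C := C_G(g)$, which is finite by hypothesis, and let $\mathcal{L}$ denote the directed set of finite subgroups of $G$ containing $C\langle g\rangle$; since $G$ is locally finite, $G=\bigcup_{F\in\mathcal{L}}F$. For each $F\in\mathcal{L}$, the element $g$ acts on $F$ by conjugation as an automorphism of order dividing $p^aq^b$ with $C_F(g)=C\cap F$, so $|C_F(g)|\le|C|$. Theorem~\ref{t2} therefore produces absolute constants $k$ and $h$, depending only on $p^aq^b$ and $|C|$, such that every $F\in\mathcal{L}$ contains a soluble subgroup $S$ of index at most $k$ and Fitting height at most $h$.

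Let $\mathcal{X}(F)$ be the finite non-empty set of pairs $\bigl(S,\,(S^{(i)})_{i=0}^{h}\bigr)$, where $S\le F$ is a soluble subgroup of index at most $k$ and $1=S^{(0)}\trianglelefteq S^{(1)}\trianglelefteq\cdots\trianglelefteq S^{(h)}=S$ is a chain of normal subgroups of $S$ with nilpotent factors (padded with repeated terms if the Fitting height of $S$ is smaller than $h$). For $F_1\le F_2$ in $\mathcal{L}$, the restriction map $\bigl(S,(S^{(i)})\bigr)\mapsto\bigl(S\cap F_1,\,(S^{(i)}\cap F_1)\bigr)$ sends $\mathcal{X}(F_2)$ into $\mathcal{X}(F_1)$: the subgroup $S\cap F_1$ has index at most $k$ in $F_1$, and $(S^{(i)}\cap F_1)$ is a chain of normal subgroups of $S\cap F_1$ whose factors embed into the nilpotent groups $S^{(i)}/S^{(i-1)}$ and are hence nilpotent. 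The resulting inverse system of finite non-empty sets has non-empty inverse limit by Tychonoff's theorem, furnishing a compatible family $\bigl\{\bigl(T_F,\,(T_F^{(i)})\bigr)\bigr\}_{F\in\mathcal{L}}$ with $T_{F_2}\cap F_1=T_{F_1}$ and $T_{F_2}^{(i)}\cap F_1=T_{F_1}^{(i)}$.

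Set $T:=\bigcup_{F\in\mathcal{L}}T_F$ and $T^{(i)}:=\bigcup_{F\in\mathcal{L}}T_F^{(i)}$; compatibility makes $T$ and each $T^{(i)}$ a subgroup of $G$. For $t\in T$ and $x\in T^{(i)}$, choosing an $F\in\mathcal{L}$ containing both and using $T_F^{(i)}\trianglelefteq T_F$ shows that $txt^{-1}\in T^{(i)}$, so each $T^{(i)}$ is normal in $T$. The index $[G:T]$ is at most $k$ because any finite subset of $G$ lies in some $F$ with $[F:T_F]\le k$, and each factor $T^{(i)}/T^{(i-1)}$ is locally nilpotent because every finite subgroup of it is contained in the image of some nilpotent $T_F^{(i)}/T_F^{(i-1)}$ (one uses here that $T_F^{(i)}\cap T^{(i-1)}=T_F^{(i-1)}$ by compatibility). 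This yields the required subgroup $T$ of finite index in $G$ together with the finite normal series $1=T^{(0)}\trianglelefteq\cdots\trianglelefteq T^{(h)}=T$ with locally nilpotent factors. The main technical point is arranging the inverse system $\mathcal{X}$ so that the restriction maps preserve all three pieces of structure---the index bound, the length of the series, and the nilpotency of its factors; once that is done, the compactness step is a routine application of Tychonoff's theorem to an inverse system of finite non-empty sets, and the passage to the union uses nothing beyond compatibility.
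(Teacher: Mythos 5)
Your proposal is correct and is precisely the ``standard inverse limit argument'' that the paper invokes (without spelling it out) to deduce Corollary~\ref{c2} from Theorem~\ref{t2}: you form, over the directed family of finite subgroups containing $C_G(g)\langle g\rangle$, the finite non-empty sets of witnesses (bounded-index soluble subgroup together with a padded normal series with nilpotent factors), check compatibility under intersection, take a point of the inverse limit, and pass to unions. All the verifications (index bound, normality, and local nilpotency of the factors via $T_F^{(i)}\cap T^{(i-1)}=T_F^{(i-1)}$) are handled correctly, so this matches the paper's intended proof.
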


Another corollary is of more technical nature but it may be useful in further studies.

\begin{corollary}\label{c3}
If a finite group $G$ admits  an automorphism $\varphi$ such that there are at most two primes dividing both $|\varphi  |$ and $|G|$, then
$G$ has a soluble subgroup whose index and  Fitting height are bounded above in terms of $|\f |$ and $|C_G(\varphi )|$.
\end{corollary}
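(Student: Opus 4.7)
The plan is to combine Hartley's theorem \cite{har92} (to reduce to the soluble case), Theorem~\ref{t2} (to handle the biprimary part of $\varphi$), and Thompson's classical coprime-action theorem \cite{tho64} (to handle the remainder). First, I would apply Hartley's theorem to produce a $\varphi$-invariant soluble subgroup $S\le G$ of index bounded in terms of $|\varphi|$ and $|C_G(\varphi)|$ (for instance, the soluble radical of $G$). Since any soluble subgroup of $S$ of bounded index with bounded Fitting height remains one in $G$, and $C_S(\varphi)\subseteq C_G(\varphi)$, we can replace $G$ by $S$ and assume that $G$ itself is soluble and $\varphi$-invariant.

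Next, let $\{p,q\}$ be the (at most two) primes dividing both $|\varphi|$ and $|G|$, and write $\varphi=\varphi_1\varphi_2$ as a product of commuting powers of $\varphi$, where $|\varphi_1|$ is the $\{p,q\}$-part $p^aq^b$ of $|\varphi|$ and $|\varphi_2|$ is the $\{p,q\}'$-part. By construction, $\gcd(|\varphi_2|,|G|)=1$. Set $F=C_G(\varphi_2)$; since $\varphi_1$ commutes with $\varphi_2$, it acts on $F$, and
\[
C_F(\varphi_1)=C_G(\langle\varphi_1,\varphi_2\rangle)=C_G(\varphi).
\]
Because $F$ is soluble, the soluble case of Theorem~\ref{t2} (which is what the paper establishes en route to its full statement), applied to $F$ with the biprimary automorphism $\varphi_1$, bounds the Fitting height of $F$ in terms of $|\varphi_1|\le|\varphi|$ and $|C_G(\varphi)|$.

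Finally, $\varphi_2$ acts on the soluble group $G$ with order coprime to $|G|$, and its centraliser $C_G(\varphi_2)=F$ has Fitting height bounded by the previous step. Thompson's theorem \cite{tho64} then bounds the Fitting height of $G$ in terms of the Fitting height of $F$ and $\alpha(\langle\varphi_2\rangle)\le\log_2|\varphi|$, and chaining the two bounds produces a bound depending only on $|\varphi|$ and $|C_G(\varphi)|$, as required.

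The main obstacle is conceptual rather than technical: one has to recognise that splitting $\varphi$ according to whether its prime divisors meet $|G|$ is precisely the decomposition that allows Theorem~\ref{t2} to dispose of one factor and Thompson's theorem the other, and that $|C_G(\varphi)|$ simultaneously controls the fixed points of $\varphi_1$ on the coprime fixed subgroup $C_G(\varphi_2)$. Once this is set up, the proof is essentially just two invocations of established results.
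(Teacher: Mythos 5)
Your proposal is correct and follows essentially the same route as the paper: reduce to the soluble case via Hartley's theorem, split $\langle\varphi\rangle$ into its Hall $\{p,q\}$-part $\varphi_1$ and the complementary coprime part $\varphi_2$, bound the Fitting height of $C_G(\varphi_2)$ by Theorem~\ref{t2} applied to $\varphi_1$, and finish with Thompson's theorem for the coprime automorphism $\varphi_2$. The only cosmetic difference is that the paper first disposes of the case where $(|\varphi|,|G|)$ is $1$ or a prime power via Proposition~\ref{p}, whereas your uniform use of Theorem~\ref{t2} with nonnegative exponents covers that case automatically.
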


\begin{remark}\label{r2}
After this paper was prepared for publication, the author became aware of an unpublished manuscript of Brian Hartley, which contains the result of Theorem~\ref{t2}; the author together with A.~Borovik and P.~Shumyatsky published this manuscript as \cite{hartley} on the web-site of the University of Manchester.
\end{remark}

\section{Preliminaries}

Induced automorphisms of invariant sections are denoted by the same letters.
The following lemma is well known.

\begin{lemma}\label{l1}
If $\varphi$ is an automorphism of a finite group $G$ and $N$ is a normal $\varphi$-invariant subgroup, then $|C_{G/N}(\varphi )|\leq |C_G(\varphi )|$.
\end{lemma}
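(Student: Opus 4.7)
The plan is a direct counting argument, which does not require coprimality of $|N|$ and $|\varphi|$. Consider the map $\sigma\colon G\to G$ defined by $\sigma(x)=x^{-1}\varphi(x)$. I would first identify two preimages under $\sigma$: on the one hand $\sigma^{-1}(1)=C_G(\varphi)$, and on the other, $\sigma^{-1}(N)$ is precisely the set of $x$ with $\varphi(x)\in xN$, i.e., the union of those cosets $xN$ that are fixed by the induced action of $\varphi$ on $G/N$. Hence $|\sigma^{-1}(N)|=|N|\cdot |C_{G/N}(\varphi)|$.

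Next I would show that for each $n\in N$, the fibre $\sigma^{-1}(n)$ is either empty or a single right coset of $C_G(\varphi)$. If $x_0,x\in\sigma^{-1}(n)$, so that $\varphi(x_0)=x_0n$ and $\varphi(x)=xn$, then
\[
\varphi(x_0x^{-1})=\varphi(x_0)\varphi(x)^{-1}=(x_0n)(xn)^{-1}=x_0x^{-1},
\]
so $x_0x^{-1}\in C_G(\varphi)$ and $x_0\in C_G(\varphi)\cdot x$. In particular $|\sigma^{-1}(n)|\le |C_G(\varphi)|$ for every $n$.

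Combining the two facts gives
\[
|N|\cdot |C_{G/N}(\varphi)|=|\sigma^{-1}(N)|=\sum_{n\in N}|\sigma^{-1}(n)|\le |N|\cdot |C_G(\varphi)|,
\]
and dividing by $|N|$ yields the desired inequality.

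There is no real obstacle in this argument; the only step requiring verification is the fibre computation, which is routine. It is worth noting that in the applications of the lemma later in the paper $N$ is typically a $p'$-section while $\varphi$ has $p$-power order, so a coprime-action argument is also available and would give the stronger equality $C_{G/N}(\varphi)=C_G(\varphi)N/N$; but the general counting argument above is all that is needed for the stated inequality.
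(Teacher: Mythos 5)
Your argument is correct and complete: the fibres of $x\mapsto x^{-1}\varphi(x)$ over elements of $N$ are indeed empty or single cosets of $C_G(\varphi)$, and the count $|N|\cdot|C_{G/N}(\varphi)|=|\sigma^{-1}(N)|\le |N|\cdot|C_G(\varphi)|$ gives the inequality. The paper states this lemma as well known and gives no proof, and what you have written is precisely the standard argument (valid without any coprimality assumption, as you note), so there is nothing to reconcile.
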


The next lemma is also a well-known consequence of considering the Jordan normal form of a linear transformation of order $p^k$ in characteristic $p$.
%and the Hall--Gorchakov--Merzlyakov lemma.

\begin{lemma}\label{l2}
If an elementary abelian $p$-group $P$ admits an automorphism $\varphi$ of order $p^k$ such that $|C_{P}(\varphi )|=p^m$, then the rank of $P$ is bounded in terms of $p^k$ and $m$.
\end{lemma}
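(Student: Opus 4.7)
The plan is to view $P$ as a vector space over $\mathbb{F}_p$ of dimension equal to its rank, and interpret $\varphi$ as an $\mathbb{F}_p$-linear operator; then the statement becomes a purely linear-algebraic fact that follows from Jordan normal form in characteristic $p$.

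First I would note that, since $\varphi^{p^k}=1$ and we are in characteristic $p$, the minimal polynomial of $\varphi$ on $P$ divides
\[
x^{p^k}-1 = (x-1)^{p^k}\quad \text{in } \mathbb{F}_p[x].
\]
Consequently $\varphi-1$ is nilpotent on $P$, the only eigenvalue of $\varphi$ is $1$, and the Jordan canonical form of $\varphi$ (which already exists over $\mathbb{F}_p$ because the eigenvalue lies in $\mathbb{F}_p$) consists of blocks $J_{d_1},\dots ,J_{d_r}$ with eigenvalue $1$. Each block size $d_i$ is at most $p^k$, since $(x-1)^{d_i}$ is the minimal polynomial of a single block and it must divide $(x-1)^{p^k}$.

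Next I would count fixed points. For a single Jordan block $J_d$ with eigenvalue $1$, the kernel of $J_d - I$ is one-dimensional, so
\[
\dim_{\mathbb{F}_p} C_P(\varphi) \;=\; \dim_{\mathbb{F}_p}\ker(\varphi - 1) \;=\; r,
\]
the total number of Jordan blocks. Combined with $|C_P(\varphi)| = p^m$, this gives $r = m$. The rank of $P$ is therefore
\[
\dim_{\mathbb{F}_p} P \;=\; \sum_{i=1}^{r} d_i \;\le\; r \cdot p^k \;=\; m p^k,
\]
which is the desired bound in terms of $p^k$ and $m$.

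There is essentially no substantial obstacle here; the only point requiring any care is the reduction of $x^{p^k}-1$ to $(x-1)^{p^k}$ in characteristic $p$, which justifies both the nilpotence of $\varphi - 1$ and the upper bound $p^k$ on block sizes. Once these are in place, counting Jordan blocks versus total dimension yields the bound immediately.
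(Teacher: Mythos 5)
Your proof is correct and is precisely the argument the paper has in mind: the lemma is stated there without proof as ``a well-known consequence of considering the Jordan normal form of a linear transformation of order $p^k$ in characteristic $p$,'' and your Jordan-block count (each block contributes one dimension to $\ker(\varphi-1)$ and has size at most $p^k$, giving $\dim P\le mp^k$) fills in exactly those details.
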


We shall use the following consequence of the Hall--Higman--type theorems in Hartley's paper \cite{har80}.

\begin{theorem}[Hall--Higman--Hartley]\label{t-hhh} Let $P$ be a Sylow $p$-subgroup of a $p$-soluble group $G$. If  $R$ is a normal subgroup of $P$ and the derived length of $R$ is $d$, then $R\leq O_{p',p,p',\ldots ,p',p}(G)$, where  $p$ occurs  on the right-hand side $d$ times if $p>3$, \  $2d$ times if $p=3$,  and $3d$ times if $p=2$.
\end{theorem}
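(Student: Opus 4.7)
The plan is to derive Theorem~\ref{t-hhh} by induction on the derived length $d$ of $R$, with the case of abelian $R$ imported from \cite{har80}. Write $c_p=1$ for $p>3$, $c_p=2$ for $p=3$, and $c_p=3$ for $p=2$, so that the claim becomes $R\leq O_{p',p,\ldots,p',p}(G)$ with $p$ occurring $c_pd$ times. First, one may reduce to the case $O_{p'}(G)=1$ by passing to $\bar G=G/O_{p'}(G)$, which preserves all hypotheses and for which the desired conclusion pulls back to $G$.

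For the base case $d=1$, the subgroup $R$ is abelian and normal in $P$, and one must show $R\leq O_{p',p,\ldots,p',p}(G)$ with $p$ occurring exactly $c_p$ times. This is the essential content of the Hall--Higman--type results of \cite{har80}, which extend the classical bound $\ell_p(G)\leq c_p\cdot d_p(G)$ (with $d_p(G)$ the derived length of a Sylow $p$-subgroup) to the local setting of a normal subgroup of $P$ that need not itself be normal in $G$.

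For the inductive step, assume $d\geq 2$ and the result holds for smaller derived lengths. Set $R_1=R^{(d-1)}$, which is abelian, characteristic in $R$, and hence normal in $P$. By the base case $R_1\leq F:=O_{p',p,\ldots,p',p}(G)$ with $p$ occurring $c_p$ times. In $\bar G=G/F$ the image $\bar R\cong R/(R\cap F)$ is normal in a Sylow $p$-subgroup $\bar P$ of $\bar G$ and, because $R^{(d-1)}\leq F$, has derived length at most $d-1$. Applying the inductive hypothesis in $\bar G$ places $\bar R$ inside the term of the upper $p$-series of $\bar G$ with $p$ occurring $c_p(d-1)$ times; pulling back to $G$ yields $R\leq O_{p',p,\ldots,p',p}(G)$ with $p$ occurring $c_pd$ times, as required.

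The principal obstacle is the base case: since $R$ is assumed normal only in $P$ and not in $G$, the classical Hall--Higman bound on $\ell_p(G)$ cannot be applied to $R$ directly, and the refined, local version established in \cite{har80} is what is needed. The dependence of the constant $c_p$ on $p$ reflects precisely the known worsening of Hall--Higman estimates at $p=2$ and $p=3$.
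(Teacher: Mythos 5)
Your proof is correct and takes essentially the same route as the paper: the abelian base case is exactly Hartley's refinement from \cite{har80}, and your induction on the derived length via $A=R^{(d-1)}$ (passing to the quotient by the resulting term of the upper $p$-series) is precisely the ``straightforward induction'' the paper invokes in one line. The bookkeeping with $c_p$ and the pullback of the upper $p$-series through $G/F$ is carried out correctly.
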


\begin{proof} As a refinement of some of the Hall--Higman theorems  \cite{ha-hi}, Hartley \cite{har80} proved that if $A$ is an abelian normal subgroup of a Sylow $p$-subgroup of $G$, then
$$
A\leq O_{p',p}(G)\qquad \text{if}\quad p>3,
$$
$$
A\leq O_{3',3,3',3}(G)\qquad \text{if}\quad p=3,
$$
and
$$
A\leq O_{2',2,2',2,2'2}(G)\qquad \text{if} \quad p=2.
$$
 The result follows   from these inclusions for $A=R^{(d-1)}$ by a straightforward induction on the derived length~$d$.
\end{proof}

We now recall some definitions and notation from representation theory. If $V$ is a $kG$-module for a field $k$ and a group $G$, we use the right operator  notation $vg$ for $v\in V$ and $g\in G$. We use  the
centralizer notation for fixed points, like $C_V(g )=\{v\in V\mid vg=v\}$.  We also use the commutator notation $[v,g]=-v+vg$ for $v\in V$ and $g\in  G$. The commutator subspaces are defined accordingly:  if $B\leq  G$, then  $[V,B]$ is the span of all commutators $[v,b]$, where $v\in V$ and $b\in B$. The subspace $[V,B]$ coincides with the commutator subgroup $[V,B]$ in the natural semidirect product $VG$ when $V$ is regarded as the additive group acted upon by $G$. In particular, $[V,B]$ is $B$-invariant, and thus can be regarded as a $kB$-submodule.

For a group $G$ and a field $k$, a~free $kG$-module of rank $n$ is a direct sum of $n$ copies of the
group algebra $kG$ each of which is regarded as a vector
space over $k$ of dimension $|G|$ with a basis $\{b_g\mid g\in
G\}$ labelled by elements of $G$ on which $G$ acts in a regular permutation
representation: $b_gh=b_{gh}$. In other words, a free $kG$-module $V=\bigoplus_{g\in G} V_g$ is a direct sum of subspaces that are regularly permuted by $G$ so that $V_gh=V_{gh}$.

The following lemma is known in the literature (see, for example, \cite[Lemma~4.5]{ha-tu}), but we give a proof for completeness.

\bl \label{l-free} Suppose that an abelian $p$-group $M$ is acted upon by a cyclic group $\langle\f\rangle$ of order $p^n$ and $V$ is a $kM\langle\varphi \rangle$-module for a field $k$  of characteristic different from $p$. If the subgroup  $[M,\varphi ^{p^{n-1}}]$ acts non-trivially on $V$, then the subspace $[V, [M,\varphi ^{p^{n-1}}]]$ is a free $k\langle\varphi \rangle$-module.
\el

Here, of course, $\varphi ^{p^{n-1}}=\f $ if $n=1$.

\bp
The subspace $[V, [M,\varphi ^{p^{n-1}}]]$  is clearly $M\langle\varphi\rangle $-invariant, so is an $kM\langle\varphi \rangle$-module.
We extend the ground field to its algebraic closure $\bar k$ and denote by $W=V\otimes _{k}\bar k$ the resulting $\bar k M\langle\varphi\rangle$-module.  Then $[W, [M,\varphi ^{p^{n-1}}]]$ is a $\bar k M\langle\varphi \rangle$-module obtained from $[V, [M,\varphi ^{p^{n-1}}]]$ by the field extension.

Since the characteristic of the ground field is coprime to $|M\langle\f\rangle |$,  by Maschke's theorem
$$
W=C_W([M,\varphi ^{p^{n-1}}])\oplus  [W, [M,\varphi ^{p^{n-1}}]]
$$
is a  completely reducible $\bar k M\langle\varphi \rangle$-module.   Let $U$ be an irreducible  $\bar k M\langle\varphi\rangle$-submodule of $[W, [M,\varphi ^{p^{n-1}}]]$ on which $[M,\varphi ^{p^{n-1}}]$ acts non-trivially.

By Clifford's theorem,  $U=U_1\oplus\dots\oplus U_m$ decomposes into homogeneous $\bar kM$-submodules $U_i$ (Wedderburn components). The group $\langle\varphi\rangle$ transitively permutes the $U_i$. If the kernel of this permutational action was non-trivial, then $\varphi ^{p^{n-1}}$ would stabilize all the $U_i$. But the abelian group $M$ acts by scalar transformations on each homogeneous component $U_i$. Hence $[M,\varphi ^{p^{n-1}}]$ would act trivially on each $U_i$ and therefore on $U$, contrary to our assumption. Thus, $U$ is a  free $\bar k\langle\varphi \rangle$-module.

Since $[W, [M,\varphi ^{p^{n-1}}]]$ is the direct sum of such $U$, we obtain that  $[W, [M,\varphi ^{p^{n-1}}]]$ is also a free $\bar k\langle\varphi \rangle$-module. Then $[V, [M,\varphi ^{p^{n-1}}]]$ is a free $k\langle\varphi \rangle$-module. Indeed,  by the Deuring--Noether theorem \cite[Theorem~29.7]{cur-rai} two representations over a
smaller field are equivalent if they are equivalent over a larger field. Being a free $\bar k\langle\f \rangle$-module,   or a free $k\langle\f\rangle $-module, means having a basis, as of a vector space over the corresponding field, elements of which are permuted by $\f$ so that all orbits are regular. In such a basis $\langle\f\rangle$ is represented by the corresponding permutational matrices, all of which are defined  over~$k$.
\ep

\section{Automorphism of order $p^n$}\label{s-pn}

First we state separately the following proposition, which will  also be used in the next section in a different situation.

  \bpr\label{p1} Suppose that a cyclic group $\langle\f\rangle$ of order $p^n$ acts by automorphisms on a finite $p$-group $P$, and $V$ is a faithful  $\F _qP\langle\f\rangle$-module, where $\F _q$ is a prime field of order $q\ne p$. Then the derived length of $[P,\f^{p^{n-1}}]$ is bounded in terms of $|C_V(\f )|$ and $p^n$.
 \epr

 \bp  Let $M$ be a maximal abelian normal subgroup of the semidirect product $P\langle\varphi\rangle$. If $[M,\varphi ^{p^{n-1}}]\ne 1$, then by Lemma~\ref{l-free},  $[V, [M,\varphi ^{p^{n-1}}]]$  is a free $\F _q\langle\varphi \rangle$-module. Obviously, in a free $\F _q\langle\varphi \rangle$-module the fixed points of $\f$ are exactly the ``diagonal'' elements. Hence the order of $[V, [M,\varphi ^{p^{n-1}}]]$ is equal to
 $$
 |C_{[V, [M,\varphi ^{p^{n-1}}]]}(\varphi )|^{|\f |}= |C_{[V, [M,\varphi ^{p^{n-1}}]]}(\varphi )|^{p^n}
 $$
 and therefore is bounded  in terms of $|C_V(\f )|$ and $p^n$. The group $[M,\varphi ^{p^{n-1}}]$ acts faithfully on $V$; therefore by Maschke's theorem it also acts faithfully on $[V, [M,\varphi ^{p^{n-1}}]]$. Hence the order of $[M,\varphi ^{p^{n-1}}]$ is bounded in terms of $|C_V(\f )|$ and $p^n$. The same of course holds if $[M,\varphi ^{p^{n-1}}]=1$.

It follows that the index $|M:C_M(\varphi ^{p^{n-1}})|$ is bounded in terms of $|C_V(\f )|$ and $p^n$, since this index is equal to the number of different commutators $[m,\varphi ^{p^{n-1}}]$ for $m\in M$.

Consider a central series of $P\langle\varphi \rangle$ connecting $1$ and $M$.
Since  $|M:C_M(\varphi ^{p^{n-1}})|$ is bounded in terms of $|C_V(\f )|$ and $p^n$, the number of factors of this series that are not covered by  $C_M(\varphi ^{p^{n-1}})$ is bounded in terms of $|C_V(\f )|$ and $p^n$. Therefore there is a normal series of bounded length connecting $1$ and $M$ each factor of which is either central in $P\langle\varphi \rangle$ or is covered by $C_M(\varphi ^{p^{n-1}})$. Obviously, then $\varphi ^{p^{n-1}}$ acts trivially on each factor of this series, and therefore so does $[P, \varphi ^{p^{n-1}}]$. By Kaluzhnin's theorem, the automorphism group induced by the action of  $[P, \varphi ^{p^{a-1}}]$ on $M$ is nilpotent of bounded class. Since $M$ contains its centralizer in $P\langle\varphi\rangle$, it follows that $[P, \varphi ^{p^{n-1}}]$ is soluble of bounded derived length, since by the above  $\gamma _s([P, \varphi ^{p^{n-1}}])\leq  M\cap [P, \varphi ^{p^{n-1}}]$
for some  number $s$ bounded in terms of $|C_V(\f )|$ and $p^n$.
\ep

\begin{proof}[Proof of Theorem~\ref{t1}] Recall that $G$ is a finite $p$-soluble group admitting an automorphism $\varphi$ of order $p^n$ such that $\f$ has at most $m$ fixed points in every $\f$-invariant elementary abelian $p'$-section of $G$. We need to bound the $p$-length of $G$ in terms of $p^n$ and $m$. Henceforth in this section, saying for brevity that a certain parameter is simply ``bounded'' we mean that this parameter is bounded above in terms of $p^n$ and $m$.

We use induction on $n$. It is convenient to consider the case of $n=0$ as the basis of induction, when $|\f|=p^0=1$, that is, $\f$ acts trivially on $G$.  Then the hypothesis means that every elementary abelian $p'$-section of $G$ has bounded order.  We claim that the nilpotency class of a Sylow $p$-subgroup $P$ of $\hat G=G/O_{p}(G)$ is bounded.   Indeed, since the order of $P$ is coprime to  $|O_{p'}(\hat G)|$, for every prime $q$ dividing $|O_{p'}(\hat G)|$ there is a $P$-invariant Sylow $q$-subgroup $Q$ of $O_{p'}(\hat G)$. The quotient $P/C_P(Q)$ acts faithfully on the Frattini quotient $Q/\Phi (Q)$, which has order at most $m$ by the assumption. Hence $P/C_P(Q)$ has bounded order and therefore bounded nilpotency class. Since $P$ acts faithfully on  $O_{p'}(\hat G)$, we have $\bigcap C_P(Q_i)=1$, where $Q_i$ runs over all  $P$-invariant Sylow subgroups of $O_{p'}(\hat G)$. Hence $P$ is a subdirect product of groups of bounded nilpotency class and therefore has bounded nilpotency class itself.
We now obtain that the $p$-length of $\hat G=G/O_{p}(G)$ is bounded by the Hall--Higman theorem \cite{ha-hi}.  As a result, the $p$-length of $G$ is bounded.

 From now on we assume that $n\geq 1 $.

Let $\hat G=G/O_p(G)$. Consider a Sylow $p$-subgroup of the semidirect product $\hat G\langle\f\rangle$ containing $\langle\f\rangle$ and let $P$ be its intersection with $\hat G$, so that $P$ is a $\f$-invariant Sylow $p$-subgroup of $\hat G$. Since the order of the $p$-group $P\langle\f\rangle$ is coprime to  $|O_{p'}(\hat G)|$, for every prime $q$ dividing $|O_{p'}(\hat G)|$ there is a $P\langle\f\rangle$-invariant Sylow $q$-subgroup $Q$ of $O_{p'}(\hat G)$.

The quotient $\bar P=P/C_P(Q)$ acts faithfully on the Frattini quotient $V=Q/\Phi (Q)$, which we regard as an $\F _qP\langle\f\rangle$-module. By hypothesis, $|C_V(\f )|\leq m$, so by Proposition~\ref{p1}  the derived length of $[\bar P,\f^{p^{n-1}}]$ is bounded. In other words, $[P,\f^{p^{n-1}}]^{(s)}\leq C_P(Q)$ for some bounded number $s$. Since $P$ acts faithfully on  $O_{p'}(\hat G)$, we have $\bigcap C_P(Q_i)=1$, where $Q_i$ runs over all  $P\langle\f\rangle$-invariant Sylow subgroups of $O_{p'}(\hat G)$. Hence,  $[P,\f^{p^{n-1}}]^{(s)}= 1$.

By the Hall--Higman--Hartley Theorem \ref{t-hhh}  we now obtain that the normal subgroup  $[P, \varphi ^{p^{n-1}}]$ of the Sylow $p$-subgroup $P$ is contained in $H=O_{p',p,p',\ldots ,p',p}(\hat G)$, where  $p$ occurs boundedly many times.

Consider the action of $\varphi$ on the quotient $\tilde G=\hat  G/H$. Since $[P, \varphi ^{p^{n-1}}]\leq H$, it follows that $\varphi ^{p^{n-1}}$ acts trivially on the image of $P$, which is a Sylow $p$-subgroup of $\tilde G$. In particular, $\varphi ^{p^{n-1}}$ acts trivially on $O_{p',p}(\tilde G)/O_{p'}(\tilde G)$, and therefore so does $[\tilde G, \varphi ^{p^{n-1}}]$. Since $O_{p',p}(\tilde G)/O_{p'}(\tilde G)$ contains its centralizer in $\tilde G/O_{p'}(\tilde G)$, we obtain that  $[\tilde G, \varphi ^{p^{n-1}}]\leq O_{p',p}(\tilde G)$. In other words,  $\varphi ^{p^{n-1}}$ acts trivially on  the quotient $\tilde G/ O_{p',p}(\tilde G)$. Therefore the order of the automorphism induced by $\varphi$ on $\tilde G/ O_{p',p}(\tilde G)$ is at most $p^{n-1}$. By the induction hypothesis the $p$-length of this quotient is bounded. Then the $p$-length of $G/O_{p,p'}(G)$ is bounded, and therefore the $p$-length of $G$ is bounded, as required. \ep

\bp[Proof of Corollary~\ref{c1}] Here, $G$ is a finite soluble group admitting an automorphism $\varphi$ of order $p^n$ such that $\f$ has at most $m$ fixed points in every $\f$-invariant elementary abelian $p'$-section of $G$. By Theorem~\ref{t1} the $p$-length of $G$ is bounded. It remains to obtain a bound for the Fitting height of every $p'$-factor $T$  of  the upper $p$-series consisting of the subgroups $O_{p',p,p',p,\dots}$. %!
It is known that the rank of a finite group is bounded in terms of the ranks of its elementary abelian sections. Here, by definition, the rank of a group is the minimum number $r$ such that every subgroup can be generated by $r$ elements. Of course every elementary abelian section of $C_T(\f )$ is a $\f$-invariant $p'$-section of $G$ and therefore has bounded order by hypothesis. It is also known that the Fitting height of a soluble finite group is bounded in terms of its rank.  Thus  $C_T(\f )$ has bounded Fitting height and therefore so does $G$ by Thompson's theorem \cite{tho64}.
\ep 

\begin{remark} %!
If we could obtain in Theorem~\ref{t1} a ``strong'' bound for the $p$-length,  in terms of $\alpha(\langle\f\rangle)$ only, for a subgroup of bounded index, then a similar strong bound could be obtained in Corollary~\ref{c1} for the Fitting height of a subgroup of bounded index. This would follow from a rank analogue of the Hartley--Isaacs theorem proved in  \cite{khu-maz06}, which states that if a finite soluble group $K$ admits  a soluble group of automorphisms $L$ of coprime order, then $K$ has a normal subgroup $N$ of Fitting height at most $5 (4^{\alpha (L)}-1)/3$ such that the order of $K/N$ is bounded in terms of $|L|$ and the rank of $C_K(L)$.
\end{remark} 

\section{\boldmath Automorphism of order $p^aq^b$}

\begin{proof}[Proof of Theorem~\ref{t2}] Recall that $G$ is a finite group admitting an automorphism $\varphi$ of order $p^aq^b$. By Hartley's theorem \cite{har92} (based on the classification of  finite simple groups), $G$ has a soluble subgroup of index bounded  in terms of $p^aq^b$ and $|C_G(\varphi )|$. Therefore we can assume from the outset that $G$ is soluble, so that we need to bound the Fitting height of $G$ in terms of $p^aq^b$ and $|C_G(\varphi )|$. Throughout this section we say for brevity that a certain parameter is ``bounded'' meaning that this parameter  is bounded above in terms of $p^aq^b$ and $|C_G(\varphi )|$. We use without special references the fact that  the number of fixed points of $\varphi$ in every $\varphi$-invariant section of $G$ is at most $|C_G(\varphi )|$ by Lemma~\ref{l1}.

We use induction on $a+b$. As a basis of induction we consider the case when either $a=0$ or $b=0$. Then $|\varphi |$ is a prime-power, and by the Hartley--Turau theorem \cite{ha-tu} the group $G$ has a subgroup of bounded index that has Fitting height at most $\alpha (\varphi )$.   (Actually, for our `weak' bound a simpler argument would suffice: if,  say,  $|\varphi |=p^a$, then the rank of the Frattini quotient of $O_{p',p}(G)/O_{p'}(G)$ is bounded by Lemma~\ref{l2}, which implies a bound for the Fitting height of $G/O_{p'}(G)$, and the Fitting height of $O_{p'}(G)$ is bounded in terms of $a$ by Thompson's theorem \cite{tho64}.) Moreover, the following proposition holds, which apparently was noted by Hartley but may have remained unpublished. We state this proposition in a more general form, without assuming that the automorphism has biprimary order.

\begin{proposition}
\label{p}
If  a finite soluble group $G$ admits an automorphism $\psi$ such that there is at most one prime dividing both $|\psi |$ and $|G|$, then the Fitting height of $G$ is bounded above in terms of $|\psi |$ and $|C_G(\psi )|$.
\end{proposition}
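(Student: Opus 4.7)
The plan is to decompose $\psi$ into its $p$-part and $p'$-part and apply a separate known result to each. Let $p$ be the unique prime (if any) dividing both $|\psi|$ and $|G|$; let $\sigma$ and $\tau$ denote, respectively, the $p$-part and the $p'$-part of $\psi$, so that $\psi=\sigma\tau=\tau\sigma$, $\sigma$ has order $p^a$ for some $a\geq 0$, and $\tau$ has order coprime to $p$. By hypothesis $|\tau|$ is then coprime to $|G|$, while $\sigma$ is a $p$-automorphism of $G$ (with $\sigma=1$ in the purely coprime case).

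First I would apply Thompson's theorem \cite{tho64} to the coprime action of $\langle\tau\rangle$ on the soluble group $G$: the Fitting height of $G$ is bounded in terms of $\alpha(\tau)\leq\alpha(|\psi|)$ and the Fitting height of the $\sigma$-invariant soluble subgroup $C_G(\tau)$. This reduces the problem to bounding the Fitting height of $C_G(\tau)$ in terms of $|\psi|$ and $|C_G(\psi)|$.

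Next I would handle the $\sigma$ factor acting on $C_G(\tau)$. Since $\sigma$ and $\tau$ commute, $C_G(\tau)$ is $\sigma$-invariant, and
\[
C_{C_G(\tau)}(\sigma)=C_G(\sigma)\cap C_G(\tau)=C_G(\psi),
\]
so $\sigma$ is an automorphism of $C_G(\tau)$ of prime-power order $p^a$ whose fixed-point subgroup has order at most $|C_G(\psi)|$. The Hartley--Turau theorem \cite{ha-tu} produces a subgroup of bounded index in $C_G(\tau)$ whose Fitting height is bounded; moreover the ``weak'' argument already indicated in the excerpt---apply Thompson's theorem to the coprime action of $\sigma$ on $O_{p'}(C_G(\tau))$, and use Lemma~\ref{l2} to bound the rank of the Frattini quotient of $O_{p',p}(C_G(\tau))/O_{p'}(C_G(\tau))$---bounds the Fitting height of $C_G(\tau)$ itself in terms of $p^a$ and $|C_G(\psi)|$. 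Combining the two reductions yields the desired bound for the Fitting height of $G$.

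The argument is essentially a formal interlocking of Thompson's coprime theorem with the prime-power theorem of Hartley--Turau. The only bookkeeping point worth flagging is that Lemma~\ref{l1} has to be invoked repeatedly to transfer the bound $|C_G(\psi)|$ into the $\psi$-invariant sections and centralizers encountered along the way, and that the final constants depend only on $|\psi|$ and $|C_G(\psi)|$; beyond this no genuine obstacle arises.
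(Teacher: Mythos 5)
Your proposal is correct and follows essentially the same route as the paper: decompose $\psi$ into its $p$-part and $p'$-part, bound the Fitting height of the centralizer of the coprime part via the Hartley--Turau theorem (or the weaker rank argument), and then invoke Thompson's theorem for the coprime action to pass from that centralizer back to $G$. The only difference is the order of exposition, which is immaterial.
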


\begin{proof}
If $(|\psi |, |G|)=1$, then the result follows from the stronger theorem of Thompson \cite{tho64}. Now let $\langle \psi\rangle=\langle \psi _r\rangle\times \langle \psi  _{r'}\rangle$, where $\langle \psi _r\rangle$ is the Sylow $r$-subgroup of $\langle \psi \rangle$ and $r$ is the only common prime divisor of $|G|$ and $|\psi |$. The centralizer $C_{G}(\psi  _{r'})$ admits the automorphism $\psi  _r$ of prime-power order whose centralizer $C_{C_{G}(\psi  _{r'})}(\psi  _r)$ is equal to $C_G(\psi  )$. By the Hartley--Turau theorem, the Fitting height of $C_{G}(\psi  _{r'})$ is bounded. We now apply Thompson's theorem to the automorphism $\psi  _{r'}$ of $G$ of coprime order to obtain that the Fitting height of $G$ is bounded as required.
\end{proof}

We return to the proof of Theorem~\ref{t2}. Let $a\geq 1$ and $b\geq 1$. Let $\varphi _p=\varphi ^{q^b}$ and $\varphi _q=\varphi ^{p^a}$, so that $|\varphi _p|=p^a$ and $|\varphi _q|=q^b$, while $\langle \varphi\rangle=\langle \varphi_p\rangle\times \langle \varphi _q\rangle$. The subgroup $O_{q'}(G)$ admits the automorphism $\varphi $ whose order has at most one prime divisor $p$ in common with $|O_{q'}(G)|$. By Proposition~\ref{p}
the Fitting height of $O_{q'}(G)$ is bounded.

Therefore we can assume that $O_{q'}(G)=1$. Then the quotient $\bar G= G/O_q(G)$ acts faithfully on the Frattini quotient  $V=O_q(G)/ \Phi (O_q(G))$, which we regard as an $\F _q\bar G\langle\varphi\rangle$-module.
The fixed-point subspace $C_V(\varphi _p)$ has bounded
order. This follows from Lemma~\ref{l2} applied to the action of the linear transformation $\varphi _q$ of order $q^b$ on  $C_V(\varphi _p)$, since the fixed points of  $\varphi _q$ in  $C_V(\varphi _p)$ are contained in the  fixed-point subspace $C_V(\varphi )$ of bounded order.

 Choose a Sylow $p$-subgroup of $\bar G\langle\varphi_p\rangle$ containing  $\langle\varphi_p\rangle$, and let $P$ be its intersection with $\bar G$, so that $P$ is a $\varphi _p$-invariant Sylow $p$-subgroup of $\bar G$. By Proposition~\ref{p1},
 the subgroup  $[P, \varphi _p^{p^{a-1}}]$ has bounded derived length.

Hence by the Hall--Higman--Hartley Theorem~\ref{t-hhh} the normal subgroup  $[P, \varphi _p^{p^{a-1}}]$ of the Sylow $p$-subgroup $P$ is contained in $H=O_{p',p,p',\ldots ,p',p}(\bar G)$, where  $p$ occurs boundedly many times.

Consider the action of $\varphi$ on the quotient $\tilde G=\bar G/H$. Since $[P, \varphi _p^{p^{a-1}}]\leq H$, it follows that $\varphi _p^{p^{a-1}}$ acts trivially on the image of $P$, which is a Sylow $p$-subgroup of $\tilde G$. In particular, $\varphi _p^{p^{a-1}}$ acts trivially on $O_{p',p}(\tilde G)/O_{p'}(\tilde G)$, and therefore so does $[\tilde G, \varphi _p^{p^{a-1}}]$. Since $O_{p',p}(\tilde G)/O_{p'}(\tilde G)$ contains its centralizer in $\tilde G/O_{p'}(\tilde G)$, we obtain that  $[\tilde G, \varphi _p^{p^{a-1}}]\leq O_{p',p}(\tilde G)$. In other words,  $\varphi _p^{p^{a-1}}$ acts trivially on  the quotient $\tilde G/ O_{p',p}(\tilde G)$. Therefore the order of the automorphism induced by $\varphi$ on $\tilde G/ O_{p',p}(\tilde G)$ divides $p^{a-1}q^b$. By induction, the Fitting height of this quotient is bounded.

It remains to obtain a bound for the Fitting height of each of the boundedly many $\varphi$-invariant normal  $p'$-sections that appear in the upper $p$-series of the groups
$H$ and $O_{p',p}(\tilde G)$. Such a bound follows from Proposition~\ref{p}.
\end{proof}

\bp[Proof of Corollary~\ref{c2}] This corollary for locally finite groups follows from Theorem~\ref{t2} by the standard inverse limit argument.
\ep

\bp[Proof of Corollary~\ref{c3}] Here, a finite  group $G$ admits  an automorphism $\varphi$ such that there are at most two primes  dividing both $|\varphi  |$ and $|G|$. Again, by Hartley's theorem \cite{har92} we can assume from the outset that $G$ is soluble.
If $(|\f |, |G|)$  is $1$ or a prime power,  then the result follows from Proposition~\ref{p}.  Now let $\langle \f\rangle=\langle \f  _{pq}\rangle\times \langle \psi \rangle$, where $\langle \f _{pq}\rangle$ is the Hall $\{p,q\}$-subgroup of $\langle \f \rangle$ and $p,q$ are the only common prime divisors of $|G|$ and $|\f |$. The centralizer $C_{G}(\psi  )$ admits the automorphism $\f _{pq}$ of biprimary order whose centralizer $C_{C_{G}(\psi )}(\f  _{pq})$ is equal to $C_G(\f  )$. By Theorem~\ref{t2}, the Fitting height of $C_{G}(\psi )$ is bounded in terms of $|\f _{pq}|$ and $|C_G(\f )|$. We now apply Thompson's theorem \cite{tho64} to the automorphism $\psi $ of $G$ of coprime order to obtain that the Fitting height of $G$ is bounded in terms of $|\f |$ and $|C_G(\f )|$.
\ep


\begin{thebibliography}{99}

\bibitem{bra-fow55} R. Brauer and K. A. Fowler,
On groups of even order, \textit{Ann. Math. (2)} \textbf{62} (1955), 565--583.

\bibitem{tho59}
{ J.~Thompson}, Finite groups with fixed-point-free
automorphosms of prime order, {\it Proc. Nat. Acad. Sci. U.S.A.},
{\bf 45} (1959), 578--581.

\bibitem{har92} B.~Hartley
A general Brauer--Fowler theorem
and centralizers in locally finite groups, {\it Pacific J. Math.} {\bf
152} (1992), 101--117.


\bibitem{hi} { G.~Higman}, Groups and rings which have
automorphisms without non-trivial fixed elements, \emph{J.~London
Math. Soc. (2)} \textbf{32} (1957), 321--334.

\bibitem{kr} { V.~A.~Kreknin}, The solubility of Lie
algebras with regular automorphisms of finite period, \emph{Dokl.
Akad. Nauk SSSR} \textbf{150} (1963), 467--469; English
transl., \emph{Math. USSR Doklady} \textbf{4} (1963), 683--685.



\bibitem{kr-ko} { V.\,A.\,Kreknin and A.\,I.\,Kostrikin}, Lie
algebras with regular automorphisms, {\it Dokl. Akad. Nauk SSSR} {\bf
149} (1963), 249--251 (Russian); English transl., {\it Math. USSR
Doklady} {\bf 4}, 355--358.



\bibitem{khu90} { E.~I.~Khukhro}, Groups and Lie rings
admitting an almost regular automorphism of prime order, \emph{Mat.
Sbornik} \textbf{181}, no.~9 (1990), 1207--1219; English transl.,
\emph{Math. USSR Sbornik} \textbf{71}, no.~9 (1992), 51--63.



\bibitem{kov} {L.~G.~Kov\'acs}, Groups with regular automorphisms of order
four, \emph{Math. Z.} \textbf{75} (1960/61), 277--294.

\bibitem{mak-khu06} N.~Yu.~Makarenko and E.~I.~Khukhro,
Finite groups with an almost regular automorphism of order four, {\it
Algebra Logika} \textbf{45} (2006), 575--602; English transl., \emph{Algebra Logic} \textbf{45} (2006), 326--343.


\bibitem{alp} J.~L.~Alperin, Automorphisms of solvable groups,
{\it Proc. Amer. Math. Soc.} {\bf 13} (1962), 175--180.

\bibitem{khu85} { E.~I.~Khukhro}, Finite $p$-groups
admitting an automorphism of order $p$ with a~small number of
fixed points, \emph{Mat. Zametki} \textbf{38} (1985), 652--657
(Russian); English transl., \emph{Math. Notes} \textbf{38} (1986),
867--870.


\bibitem{sha93}  Shalev A.  On almost fixed point free
automorphisms, {\it J.
Algebra}, {\bf 157}, 271--282.

\bibitem{khu93}  Khukhro E.\,I.  Finite $p$-groups admitting $
p$-automorphisms with few
fixed points, {\it Mat. Sbornik}, {\bf 184} (1993), 53--64;
English
transl., {\it Russian Acad. Sci. Sbornik Math.}, {\bf 80} (1995),
435--444.


\bibitem{me} Yu.~Medvedev  $p$-Divided Lie rings and
$p$-groups, {\it J.~London Math. Soc. (2)}, {\bf 59} (1999), 787--798.


\bibitem{jai} A.~Jaikin-Zapirain,
On almost regular automorphisms of finite $p$-groups, {\it Adv. Math.} {\bf 153} (2000), 391--402.








\bibitem{tho64} { J.~Thompson}, Automorphisms of
solvable groups, \emph{J.~Algebra} \textbf{1} (1964), 259--267.



\bibitem{tur84}
A.~Turull, Fitting height of groups and of fixed points, \emph{J.~Algebra} \textbf{86} (1984), 555--566.

\bibitem{har-isa90}
{ B.~Hartley and I.~M.~Isaacs}, On characters and fixed points of
coprime operator groups, \emph{J.~Algebra} \textbf{131} (1990), 342--358.



\bibitem{be-ha}
{ S.\,D.~Bell, B.~Hartley}, A note on fixed-point-free actions of
finite groups, {\it Quart. J. Math. Oxford Ser. (2)}, {\bf 41}
(1990), N~162, 127--130.


\bibitem{dad69} {E.~C.~Dade}, Carter subgroups and Fitting heights of finite
solvable groups, \emph{Illinois J.~Math.} \textbf{13} (1969),
449--514.


\bibitem{er12} G. Ercan, On a Fitting length conjecture without the coprimeness condition,
\textit{Monatsh. Math.} {\bf 167}, no.~2 (2012), 175--187.

\bibitem{er04} G. Ercan and  \.{I}. G\"{u}lo\u{g}lu,  On finite groups admitting a fixed point free automorphism of order $pqr$, \textit{J. Group Theory} {\bf 7}, no.~4 (2004), 437--446.

\bibitem{er08} G. Ercan and  \.{I}. G\"{u}lo\u{g}lu,  Fixed point free action on groups of odd order, \textit{J. Algebra} {\bf 320} (2008), 426--436.




\bibitem{kour}
\emph{Unsolved
Problems in Group Theory. The Kourovka Notebook}, no.~18, Institute
of Mathematics, Novosibirsk, 2014.


 \bibitem{ha-tu}
{ B.~Hartley and  V.~Turau}, Finite soluble groups admitting an
automorphism of prime power order with few fixed points, {\it
Math. Proc. Cambridge Philos. Soc.}, {\bf 102} (1987), 431--441.


\bibitem{rae} Rae, A. Sylow $p$-subgroups of finite $p$-soluble groups. J. London Math. Soc. (2) 7 (1973), 117--123.

\bibitem{ha-ra} Hartley, B.; Rae, A. Finite $p$-groups acting on $p$-soluble groups. Bull. London Math. Soc. 5 (1973), 197--198.




\bibitem{kur} Kurzweil, Hans. Eine Verallgemeinerung von fixpunktfreien Automorphismen endlicher Gruppen. (German) Arch. Math. (Basel) 22 (1971), 136--145.

\bibitem{mei} Meixner, Thomas. The Fitting length of solvable $H\sb {p\sp n}$-groups. Israel J. Math. 51 (1985), no. 1-2, 68--78




\bibitem{wil} Wilson, John S. On the structure of compact torsion groups. Monatsh. Math. 96 (1983), no. 1, 57--66.

    \bibitem{khu-shu132} Khukhro, E. I.; Shumyatsky, P. Words and pronilpotent subgroups in profinite groups. J. Aust. Math. Soc. 97 (2014), no. 3, 343--364.

\bibitem{ha-hi} P.  Hall and G. Higman, The $p$-length of a $p$-soluble group and reduction
theorems for Burnside's problem, {\it Proc. London Math. Soc. (3)}, {\bf
 6} (1956), 1--42.



\bibitem{hoa} A. H. M. Hoare, A note on $2$-soluble groups, \emph{J. London Math. Soc.} {\bf 35} 1960, 193--199.

   \bibitem{ber-gro} T. R. Berger and F. Gross, $2$-length and the derived length of a Sylow $2$-subgroup, \emph{Proc. London Math. Soc. (3)} {\bf 34} (1977), 520--534.

 \bibitem{bry} E. G. Bryukhanova, The relation between $2$-length and derived length of a Sylow $2$-subgroup of a finite soluble group, \emph{Mat. Zametki} {\bf 29}, no.~2 (1981), 161--170; English transl., \emph{Math. Notes} {\bf 29}, no.~1--2 (1981), 85--90.

\bibitem{hartley} B.~Hartley, Automorphisms of finite soluble groups.
Preliminary version,
MIMS EPrint: 2014.52 \verb#http://eprints.ma.man.ac.uk/2188/01/covered/MIMS_ep2014_52.pdf#



\bibitem{har80} B. Hartley, Some theorems of Hall--Higman type for small primes,
Proc. London Math. Soc. (3) {\bf 41} (1980), 340--362.




\bibitem{cur-rai} C. W. Curtis and I. Reiner,
\textit{Representation theory of finite groups and associative
algebras}, Interscience, New York--London, 1962.

\bibitem{khu-maz06}  E. Khukhro and V. Mazurov, Automorphisms with centralizers of small rank, \textit{Groups St. Andrews 2005. Vol. II. Selected papers of the conference, St. Andrews, UK, July 30--August 6, 2005},  London Math. Soc. Lecture Note Ser., vol. 340, Cambridge Univ. Press, Cambrisge, 2007,  564--585.
\end{thebibliography}
\end{document}